 \newcommand{\eps}{\varepsilon}
\DeclareMathOperator{\sign}{sign}
\newtheorem{thm}{Theorem}[section]
\newtheorem{lem}[thm]{Lemma}
\newtheorem{rem}[thm]{Remark}
\newtheorem{ex}[thm]{Example}
\title{A determinant characterization of moment sequences with finitely many mass-points}
\author{Christian Berg\thanks{The  first author has
    been supported by grant 10-083122 from  {\it The Danish
    Council for Independent Research $|$ Natural Sciences}}
\and Ryszard Szwarc
}
\begin{document}
\maketitle

\begin{abstract} To a sequence $(s_n)_{n\ge 0}$ of real numbers we associate the sequence of Hankel matrices $\mathcal H_n=(s_{i+j}),0\le i,j \le n$. We prove that if the corresponding sequence of Hankel determinants $D_n=\det\mathcal H_n$ satisfy $D_n>0$ for $n<n_0$ while $D_n=0$ for $n\ge n_0$, then all Hankel matrices are positive semi-definite, and in particular $(s_n)$ is the sequence of moments of a discrete measure concentrated in $n_0$ points on the real line. We stress that the conditions $D_n\ge 0$ for all $n$ do not imply the positive semi-definiteness of the Hankel matrices.  
  \end{abstract}
\noindent 
2000 {\em Mathematics Subject Classification}:\\
Primary 44A60; Secondary 15A15 

\noindent
Keywords:  moment problems, Hankel determinants, sign pattern of leading principal minors.

\section{Introduction and results}\label{sec-intro}

Given a sequence of real numbers $(s_n)_{n\ge 0}$, it was proved by Hamburger \cite{Ha} that it can be represented as
\begin{equation}\label{eq:Ha}
s_n=\int_{-\infty}^\infty x^n\,d\mu(x),\quad n\ge 0
\end{equation}
with a positive measure $\mu$ on the real line, if and only if all the Hankel matrices
$\mathcal H_n=(s_{i+j}),0\le i,j\le n$ are positive semi-definite. The sequences \eqref{eq:Ha} are called {\it Hamburger moment sequences} or {\it positive definite sequences} on $\mathbb N_0=\{0,1,\ldots\}$ considered as an additive semigroup under addition, cf. \cite{B:C:R}.

Given a Hamburger moment sequence it is clear that all the Hankel determinants $D_n=|\mathcal H_n|$ are non-negative. It is also easy to see (cf. Lemma~\ref{thm:deg} and its proof)  that only two possibilities can occur:
Either $D_n>0$ for $n=0,1,\ldots$ and in this case any $\mu$ satisfying \eqref{eq:Ha} has infinite support, or there exists $n_0$ such that $D_n>0$ for $n\le n_0-1$ and $D_n=0$ for $n\ge n_0$. In this latter case $\mu$ from \eqref{eq:Ha} is uniquely determined and is a discrete measure concentrated in $n_0$ points on the real axis. (If $n_0=0$ and $D_n=0$ for all $n$, then $\mu=0$ is concentrated in the empty set.)

The purpose of the present paper is to prove the following converse result:

\begin{thm}\label{thm:main} Let $(s_n)$ be a real sequence and assume that the sequence of Hankel determinants $D_n=|\mathcal H_n|$ satisfy $D_n>0,n\le n_0-1$, $D_n=0,n\ge n_0$. Then $(s_n)$ is a Hamburger moment sequence (and then necessarily
 the moments of a uniquely determined measure $\mu$ concentrated in $n_0$ points).
 \end{thm}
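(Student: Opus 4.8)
The plan is to reduce everything to a single relation: produce a monic polynomial $Q$ of degree $n_0$ such that the linear functional $L$ defined on polynomials by $L(x^m)=s_m$ annihilates every multiple of $Q$, i.e.\ $L(x^mQ)=0$ for all $m\ge0$. Granting this, the theorem follows quickly. Since $D_{n_0-1}>0$, the form $b\mapsto\sum_{i,j}b_ib_js_{i+j}=b^\top\mathcal H_{n_0-1}b$ is positive definite on polynomials of degree $\le n_0-1$; for an arbitrary polynomial $P$ write $P=AQ+B$ with $\deg B<n_0$, so that $P^2=(A^2Q+2AB)Q+B^2$ and hence $L(P^2)=L(B^2)\ge0$. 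As $P$ ranges over all polynomials this says exactly that every $\mathcal H_n$ is positive semi-definite, so by Hamburger's theorem $(s_n)$ is a moment sequence; the representing measure is then $n_0$-atomic and unique by (the proof of) Lemma~\ref{thm:deg}.

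First I would set up $Q$. By Sylvester's criterion the positivity $D_0,\dots,D_{n_0-1}>0$ makes $\mathcal H_{n_0-1}$ positive definite, hence nonsingular. Since $D_{n_0}=0$ there is $v=(v_0,\dots,v_{n_0})\neq0$ with $\mathcal H_{n_0}v=0$, and nonsingularity of the leading block $\mathcal H_{n_0-1}$ forces $v_{n_0}\neq0$; normalise $v_{n_0}=1$ and set $Q(x)=\sum_{j=0}^{n_0}v_jx^j$. Reading off the rows of $\mathcal H_{n_0}v=0$ gives $L(x^iQ)=\sum_jv_js_{i+j}=0$ for $i=0,\dots,n_0$, equivalently $\theta_p:=L(x^{p-n_0}Q)=\sum_{j=0}^{n_0}v_js_{p-n_0+j}=0$ for $n_0\le p\le 2n_0$.

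The main step is to show $\theta_p=0$ for all $p\ge n_0$. Suppose not and let $K$ be the smallest index with $\theta_K\neq0$; then $K\ge 2n_0+1$. I would test this against $D_n$ with $n=K-n_0\,(>n_0)$. In $\mathcal H_n$ perform, for $i=n_0,\dots,n$, the unimodular row operation $R_i\mapsto\sum_{l=0}^{n_0}v_lR_{i-n_0+l}$ (the coefficient of $R_i$ itself is $v_{n_0}=1$ and only strictly earlier rows are used, so the determinant is unchanged). The new $(i,j)$ entry is $\sum_lv_ls_{i+j-n_0+l}=\theta_{i+j}$, which vanishes whenever $i+j<K$. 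Thus the transformed matrix is block upper triangular $\left(\begin{smallmatrix}\mathcal H_{n_0-1}&B\\0&C\end{smallmatrix}\right)$, where $C=(\theta_{i+j})_{n_0\le i,j\le n}$ is a Hankel matrix whose entries vanish strictly above its main anti-diagonal and equal $\theta_K$ on it. Hence $\det C=\pm\theta_K^{\,n-n_0+1}$ and $D_n=D_{n_0-1}\det C=\pm D_{n_0-1}\,\theta_K^{\,n-n_0+1}$. Since $D_n=0$ while $D_{n_0-1}>0$, this forces $\theta_K=0$, a contradiction; so $\theta_p=0$ for all $p$, which is the desired relation $L(x^mQ)=0$.

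The hard part is exactly this last step, and it is where the hypothesis $D_n=0$ (rather than merely $D_n\ge0$) is indispensable: the anti-triangular block $C$ turns $D_n$ into a pure power of the first deviation $\theta_K$, so it is the vanishing of $D_n$, not any sign condition, that propagates the recurrence. The remaining points are routine: $v_{n_0}\neq0$ and the determinant-invariance of the row operation are immediate, and the division step uses only positive definiteness of $\mathcal H_{n_0-1}$. If one wants the support exhibited directly, the relations $L(x^iQ)=0$ for $i\le n_0$ together with positive definiteness on degree $<n_0$ show by the classical sign-change argument that $Q$ has $n_0$ distinct real zeros, which are then the mass points.
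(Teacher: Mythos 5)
Your proof is correct, but it follows a genuinely different route from the paper's. The paper first shows that $\mathcal H_{n_0}$ is positive semi-definite by perturbing $s_{2n_0}$ by $\eps$, then uses orthogonal polynomials and Gaussian quadrature to construct an $n_0$-atomic measure $\mu$ matching the moments up to order $2n_0-1$, and finally proves by induction that all higher moments of $\mu$ agree with the $s_k$; the key step is the identity $D_{n_0+p}=(-1)^{p(p+1)/2}D_{n_0-1}\left(s_{2n_0+p}-\tilde s_{2n_0+p}\right)^{p+1}$ of Lemma~\ref{thm:det2}, whose proof expands the determinant multilinearly over the atoms of $\mu$. You never construct a measure: you extract the kernel polynomial $Q$ of $\mathcal H_{n_0}$, prove the annihilation relations $L(x^mQ)=0$ for all $m$ by a minimal-counterexample argument, and then obtain positive semi-definiteness of every $\mathcal H_n$ by division with remainder, closing with Hamburger's theorem. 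Your determinant computation is the exact counterpart of Lemma~\ref{thm:det2} --- both reduce $D_{n_0+p}$ to $\pm D_{n_0-1}$ times a power of the first deviation --- but you get it by unipotent row operations and a block anti-triangular structure rather than via the quadrature measure, which makes your version more elementary and independent of Lemmas~\ref{thm:det1} and~\ref{thm:det2}. The price is that existence of the representing measure is outsourced to Hamburger's theorem, whereas the paper's argument is constructive (it exhibits $\mu$ as the quadrature measure at the zeros of $p_{n_0}$); your closing remark about the sign-change argument recovers this if desired. The individual steps all check out: $v_{n_0}\neq0$ follows from the nonsingularity of $\mathcal H_{n_0-1}$, the row operations form a unipotent lower triangular transformation so preserve $D_n$, and the choice $n=K-n_0$ makes the lower-left block vanish and puts $\theta_K$ exactly on the anti-diagonal of $C$, so that $D_n=\pm D_{n_0-1}\theta_K^{\,n-n_0+1}$ forces $\theta_K=0$.
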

  
\begin{rem} {\rm It follows from a general theorem about real symmetric matrices, that if
$D_n>0$ for $n\le n_0$, then the Hankel matrix $\mathcal H_{n_0}$ is positive definite. For a proof see e.g. \cite[p.70]{B:C:R}. On the other hand, one cannot conclude that $\mathcal H_{n_0}$ is positive semi-definite, if it is just known that $D_n\ge 0$ for $n\le n_0$. For the sequence $1,1,1,1,0,0,\ldots$ we have $D_0=D_3=1, D_1=D_2=D_n=0$ for
$n\ge 4$, but the Hankel matrix $\mathcal H_2$ has a negative eigenvalue. It therefore seems to be of interest that Theorem~\ref{thm:main} holds.\footnote{The authors thank Alan Sokal for having mentioned the question.} 
}
\end{rem} 

\begin{rem} {\rm It follows from the proof of Theorem~\ref{thm:main} that the uniquely determined measure $\mu$ is concentrated in the zeros of the polynomial $p_{n_0}$ given by
\eqref{eq:op}.}
\end{rem}

\begin{ex} {\rm Let $a\ge 1$ and define $s_{2n}=s_{2n+1}=a^n,n=0,1,\ldots$. Then the Hankel determinants are $D_0=1,D_1=a-1,D_n=0,\,n\ge 2$. Therefore $(s_n)$ is a moment
sequence of the measure
$$
\mu=\frac{\sqrt{a}-1}{2\sqrt{a}}\delta_{-\sqrt{a}}+ \frac{\sqrt{a}+1}{2\sqrt{a}}\delta_{\sqrt{a}}.
$$
Similarly, for $0\le a\le 1$, $s_0=1, s_{2n-1}=s_{2n}=a^n,n\ge 1$ is a moment sequence of the measure
$$
\mu=\frac{1-\sqrt{a}}2\delta_{-\sqrt{a}}+\frac{1+\sqrt{a}}2\delta_{\sqrt{a}}.
$$
}
\end{ex}

\section{Proofs}\label{sec-pr}

Consider a discrete measure
\begin{equation}\label{eq:disc}
\mu=\sum_{j=1}^n m_j\delta_{x_j},
\end{equation}
where $m_j>0$ and $x_1<x_2<\ldots<x_n$ are $n$ points on the real axis.
Denote the moments 
\begin{equation}\label{eq:simple}
s_k=\int x^k\,d\mu(x)=\sum_{j=1}^n m_jx_j^k, \quad k=0,1,\ldots,
\end{equation}
and let $\mathcal H_k,D_k$ denote the corresponding Hankel matrices and determinants.
The following Lemma is well-known, but for the benefit of the reader we give a short proof.

\begin{lem}\label{thm:deg} The Hankel determinants $D_k$ of the moment sequence \eqref{eq:simple}
satisfy $D_k>0$ for $k<n$ and $D_k=0$ for $k\ge n$.
\end{lem}

\begin{proof} Let 
$$
P(x)=\sum_{j=0}^n a_j x^j
$$
be the monic  polynomial (i.e., $a_n=1$) of degree $n$ with zeros $x_1,\ldots,x_n$. If $\textbf{a}=(a_0,\ldots,a_n)$ then 
$$
\int P^2(x)\,d\mu(x)=\textbf{a}\mathcal H_n \textbf{a}^t=0,
$$
and it follows that $D_n=0$. If $p\ge 1$ and $\textbf{0}_p$ is the zero vector in $\mathbb R^p$, then
also 
$$
(\textbf{a},\textbf{0}_p)\mathcal H_{n+p}(\textbf{a},\textbf{0}_p)^t=0,
$$
and it follows that $D_{n+p}=0$ for all $p\ge 1$.

On the other hand, if a Hamburger moment sequence \eqref{eq:Ha} has $D_k=0$ for some $k$, then there exists $\textbf{b}=(b_0,\ldots,b_k)\in\mathbb R^{k+1}\setminus \{\textbf{0}\}$ such that $\textbf{b}\mathcal H_k=\textbf{0}$. Defining
$$
Q(x)=\sum_{j=0}^k b_jx^j,
$$
we find
$$
0=\textbf{b}\mathcal H_k\textbf{b}^t=\int Q^2(x)\,d\mu(x),
$$
showing that $\mu$ is concentrated in the zeros of $Q$. Therefore $\mu$ is a discrete measure having at most $k$ mass-points. This remark shows that the Hankel determinants of  \eqref{eq:simple} satisfy $D_k>0$ for $k<n$.
\end{proof}

\begin{lem}\label{thm:det1} Consider $n+1$ non-negative integers $0\le c_1<c_2<\ldots<c_{n+1}$, let $p\ge 1$ be an integer and
define the $(n+1)\times(n+p)$-matrix
$$
H_{n+1,n+p}=\left(\begin{matrix} s_{c_1} & s_{c_1+1} &\cdots & s_{c_1+n+p-1}\\
s_{c_2} &  s_{c_2+1} & \cdots & s_{c_2+n+p-1}\\
\vdots &\vdots & \ddots &\vdots\\
s_{c_{n+1}} & s_{c_{n+1}+1} &\cdots & s_{c_{n+1}+n+p-1}
\end{matrix}\right).
$$
For any $(p-1)\times(n+p)$-matrix $A_{p-1,n+p}$  we have 
$$
D=\left|\begin{matrix} H_{n+1,n+p}\\
A_{p-1,n+p}\end{matrix}\right|=0.
$$
\end{lem}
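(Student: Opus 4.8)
The plan is to exploit the rank structure of $H_{n+1,n+p}$ forced by the fact that the $s_k$ come from a measure $\mu=\sum_{j=1}^n m_j\delta_{x_j}$ with only $n$ mass-points. The key observation is that each entry splits as
$$
s_{c_i+\ell}=\sum_{j=1}^n m_j x_j^{c_i}\,x_j^{\ell},\qquad \ell=0,1,\ldots,n+p-1,
$$
so that every one of the $n+1$ rows of $H_{n+1,n+p}$ lies in the span of the $n$ fixed vectors $(1,x_j,x_j^2,\ldots,x_j^{n+p-1})\in\mathbb R^{n+p}$, $j=1,\ldots,n$.

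First I would record this splitting as a matrix factorization $H_{n+1,n+p}=LMW$, where $L$ is the $(n+1)\times n$ matrix with entries $L_{ij}=x_j^{c_i}$, $M=\operatorname{diag}(m_1,\ldots,m_n)$, and $W$ is the $n\times(n+p)$ matrix with entries $W_{j,\ell+1}=x_j^{\ell}$; a direct computation of $(LMW)_{i,\ell+1}$ reproduces $s_{c_i+\ell}$. Since the common inner dimension of this product is $n$, it follows at once that $\operatorname{rank}H_{n+1,n+p}\le n$.

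Having $n+1$ rows of rank at most $n$, the rows of $H_{n+1,n+p}$ are linearly dependent, so there is a nonzero $\mathbf v=(v_1,\ldots,v_{n+1})$ with $\mathbf v\,H_{n+1,n+p}=\mathbf 0$. This same relation is a nontrivial linear dependence among the top $n+1$ rows of the square matrix $\left(\begin{smallmatrix}H_{n+1,n+p}\\ A_{p-1,n+p}\end{smallmatrix}\right)$, and hence among all its $n+p$ rows, irrespective of the bottom block $A_{p-1,n+p}$. Therefore $D=0$.

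The argument is short, and I do not anticipate a genuine obstacle: the only point requiring care is the index bookkeeping in $H=LMW$, namely checking that the exponent $c_i$ carried by $L$ and the exponent $\ell$ carried by $W$ combine to $x_j^{c_i}x_j^{\ell}=x_j^{c_i+\ell}$ in each entry. Note that positivity of the weights $m_j$ plays no role here; all that matters is that there are exactly $n$ points $x_j$, which caps the rank at $n$.
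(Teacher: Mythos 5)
Your proof is correct and rests on the same observation as the paper's: every row of $H_{n+1,n+p}$ lies in the span of the $n$ vectors $(1,x_j,x_j^2,\ldots,x_j^{n+p-1})$, so the $n+1$ rows are linearly dependent and any determinant containing them vanishes. The paper packages this via multilinearity of the determinant in the rows (each of the resulting terms has two equal rows, since only $n$ points are available for $n+1$ rows), while you package it as the rank bound $\operatorname{rank}H_{n+1,n+p}\le n$ coming from the factorization $H=LMW$; the two arguments are the same in substance.
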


\begin{proof} By multilinearity of a determinant as function of the rows we have
$$
D=\sum_{j_1,\ldots,j_{n+1}=1}^n m_{j_1}\cdots m_{j_{n+1}}x_{j_1}^{c_1}\cdots
x_{j_{n+1}}^{c_{n+1}}\left|\begin{matrix} J\\
A_{p-1,n+p}\end{matrix}\right|,
$$
where $J$ is the $(n+1)\times(n+p)$-matrix with rows
$$
\left(1,x_{j_l},x_{j_l}^2,\ldots, x_{j_l}^{n+p-1}\right),\,l=1,2,\ldots,n+1,
$$
and since there are $n$ points $x_1,\ldots,x_n$, two of these rows will always be equal.
This shows that each determinant in the sum vanishes and therefore $D=0$.
\end{proof}

With $n,p$ as above we now consider a determinant of a matrix
 $(a_{i,j}),0 \le i,j\le n+p$ of size $n+p+1$ of the following special form
$$
M_{n+p}=\left|\begin{matrix}
s_0 & \cdots & s_{n-1} & s_n & \cdots & s_{n+p-1} & s_{n+p}\\
\vdots & \ddots & \vdots & \vdots & \ddots & \vdots & \vdots\\
 s_{n-1} & \cdots &  s_{2n-2} & s_{2n-1} & \cdots & s_{2n+p-2} & s_{2n+p-1}\\
s_n & \cdots & s_{2n-1} & s_{2n} &\cdots & s_{2n+p-1} & x_0\\
s_{n+1} & \cdots & s_{2n} & s_{2n+1} &\cdots & x_1 & a_{n+1,n+p}\\
\vdots & \ddots & \vdots & \vdots & \ddots & \vdots & \vdots\\
s_{n+p} & \cdots & s_{2n+p-1} & x_{p} &\cdots & a_{n+p,n+p-1} & a_{n+p,n+p}
\end{matrix}\right|
$$
which has Hankel structure to begin with, i.e., $a_{i,j}=s_{i+j}$ for $i+j\le 2n+p-1$. For simplicity we have called $a_{n+j,n+p-j}=x_j, j=0,1,\ldots,p$.

\begin{lem}\label{thm:det2}
$$
M_{n+p}=(-1)^{p(p+1)/2}D_{n-1}\prod_{j=0}^p (x_j-s_{2n+p}).
$$
In particular, the determinant is independent of $a_{i,j}$ with $i+j\ge 2n+p+1$.
\end{lem}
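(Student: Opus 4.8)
The plan is to view $M_{n+p}$ as a multilinear function of the $p+1$ anti-diagonal entries $x_0,\dots,x_p$ and to exploit that the Hankel matrix of the $n$-point measure $\mu$ has rank exactly $n$. Put $t_j=x_j-s_{2n+p}$ and let $\mathcal A$ be the matrix obtained from $M_{n+p}$ by replacing every $x_j$ with $s_{2n+p}$. Since the anti-diagonal $i+j=2n+p$ then carries the value $s_{2n+p}=s_{i+j}$, the matrix $\mathcal A$ is genuinely of Hankel type throughout the region $i+j\le 2n+p$, with arbitrary entries only for $i+j\ge 2n+p+1$. The matrix underlying $M_{n+p}$ is therefore $\mathcal A+\sum_{j=0}^{p} t_j\,E_{n+j,\,n+p-j}$, where $E_{r,c}$ denotes the matrix unit. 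Because these perturbations lie in distinct rows $n,\dots,n+p$ and distinct columns $n+p,\dots,n$, expanding the determinant multilinearly in these rows gives
\[
M_{n+p}=\sum_{S\subseteq\{0,\dots,p\}}\Bigl(\prod_{j\in S}t_j\Bigr)\,\Delta_S,
\]
where $\Delta_S$ is the determinant of the matrix obtained from $\mathcal A$ by replacing, for each $j\in S$, the row $n+j$ with the unit row $e_{n+p-j}$. This single expansion should yield both assertions at once, the independence from the entries with $i+j\ge 2n+p+1$ being automatic once only the term $S=\{0,\dots,p\}$ survives.

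First I would show $\Delta_S=0$ for every proper subset $S$. The rows $0,\dots,n-1$ of $\mathcal A$ are full moment rows, since their indices satisfy $i+j\le 2n+p-1$, and none of them is altered. Let $k=\min\bigl(\{0,\dots,p\}\setminus S\bigr)$, so that $0,\dots,k-1\in S$. Expanding $\Delta_S$ along the unit rows $n+j$ $(j\in S)$ reduces it, up to sign, to the minor of $\mathcal A$ that deletes the rows $\{n+j:j\in S\}$ and the columns $\{n+p-j:j\in S\}$; in particular the deleted columns include $n+p,n+p-1,\dots,n+p-k+1$, which are precisely the columns in which row $n+k$ of $\mathcal A$ carries its arbitrary entries. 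Hence in that minor row $n+k$ is again a pure moment row, and it is present because $k\notin S$. Thus the minor contains the $n+1$ moment rows indexed by $0,\dots,n-1,n+k$. Since $\mu$ is supported on only $n$ points, the monic polynomial $P$ of Lemma~\ref{thm:deg} vanishing at the $x_i$ gives $\sum_{l=0}^n a_l s_{m+l}=\sum_i m_i x_i^m P(x_i)=0$ for all $m$, so the infinite Hankel matrix has rank $n$ and any $n+1$ of its rows are linearly dependent; restricting to the surviving columns preserves the dependence. These $n+1$ rows are therefore linearly dependent and $\Delta_S=0$. This is the same mechanism as in Lemma~\ref{thm:det1}.

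It remains to evaluate the top coefficient $\Delta_{\{0,\dots,p\}}$, the determinant of the matrix whose rows $0,\dots,n-1$ are the Hankel rows of $\mathcal H_{n+p}$ and whose rows $n+j$ $(0\le j\le p)$ are the unit rows $e_{n+p-j}$. A Laplace expansion along the bottom $p+1$ rows leaves only the column block $\{n,\dots,n+p\}$, since those unit vectors are supported there, and the aligned Laplace sign is $+1$; the bottom block is the $(p+1)\times(p+1)$ exchange matrix with determinant $(-1)^{p(p+1)/2}$, while the complementary top block is $\mathcal H_{n-1}$. Hence $\Delta_{\{0,\dots,p\}}=(-1)^{p(p+1)/2}D_{n-1}$, and collecting terms gives
\[
M_{n+p}=(-1)^{p(p+1)/2}D_{n-1}\prod_{j=0}^{p}(x_j-s_{2n+p}).
\]
As no entry with $i+j\ge 2n+p+1$ appears on the right, the stated independence follows at once. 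The main obstacle is the bookkeeping in the middle step, namely checking uniformly that for each proper $S$ the row $n+k$ loses all of its arbitrary entries to the deleted columns, together with pinning down the exchange-matrix sign; the vanishing itself is then immediate from the rank-$n$ dependence.
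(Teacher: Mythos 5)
Your proof is correct, and it rests on the same two pillars as the paper's --- the vanishing of any determinant containing $n+1$ moment rows of the $n$-point measure (Lemma~\ref{thm:det1}, which you re-derive as a rank bound via $\sum_l a_l s_{m+l}=0$) and the fact that $M_{n+p}$ is affine in each $x_j$ --- but it organizes them quite differently. The paper first shows that $M_{n+p}$ vanishes on each hyperplane $x_k=s_{2n+p}$ by an iterated Laplace expansion along the last columns (picking up, along the way, the independence from the entries with $i+j\ge 2n+p+1$), deduces the factorization $M_{n+p}=K\prod_j(x_j-s_{2n+p})$, and then extracts $K$ as the coefficient of $x_0\cdots x_p$ in the Leibniz expansion, identifying the sign of the order-reversing permutation of $n,\dots,n+p$. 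You instead recentre at $t_j=x_j-s_{2n+p}$ and perform a single multilinear expansion in the rows $n,\dots,n+p$, so that the factorization, the leading coefficient (via the block-triangular form whose lower block is the exchange matrix), and the independence statement all fall out of the one identity $M_{n+p}=\sum_S\bigl(\prod_{j\in S}t_j\bigr)\Delta_S$. The cost is the combinatorial check that for every proper $S$ the row $n+k$, with $k$ the least index outside $S$, loses exactly its arbitrary entries to the deleted columns $n+p,\dots,n+p-k+1$ --- which you carry out correctly, including the boundary case $k=0$ --- and the payoff is that you avoid the paper's iteration and its ``the argument can now be repeated'' step. The two sign computations coincide: your $(p+1)\times(p+1)$ exchange matrix is precisely the paper's reversal permutation, each contributing $(-1)^{p(p+1)/2}$.
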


\begin{proof} We first observe that the determinant vanishes if we put $x_0=s_{2n+p}$, because then the first $n+1$ rows in $M_{n+p}$ have the structure of the matrix of Lemma~\ref{thm:det1} with $c_j=j-1,j=1,\ldots,n+1$.

Next we develop the determinant after the last column leading to
$$
M_{n+p}= \sum_{l=0}^{n+p} \pm \gamma_l A_l,
$$
where $\gamma_l$ are the elements in the last column and $A_l$ are the corresponding minors, i.e., the determinants obtained by deleting row number $l+1$ and the last column. Notice that
$A_l=0$ for $l=n+1,\ldots,n+p$ because of Lemma~\ref{thm:det1}. Therefore the numbers 
$a_{n+k,n+p}$ with $k=1,\ldots,p$ do not contribute to the determinant.

For $l=0,\ldots,n$ the determinant $A_l$ has the form
$$
\left|\begin{matrix}
s_{c_1} & \cdots & s_{c_1+n} &\cdots & s_{c_1+n+p-1}\\
s_{c_2} & \cdots & s_{c_2+n} &\cdots & s_{c_2+n+p-1}\\
\vdots & \ddots & \vdots & \ddots & \vdots\\
s_{c_n} & \cdots & s_{c_n+n} &\cdots & s_{c_n+n+p-1}\\
s_{n+1} & \cdots & s_{2n+1} & \cdots & x_1\\
\vdots & \ddots & \vdots & \ddots & \vdots\\
s_{n+p} & \cdots & x_p &\cdots & a_{n+p,n+p-1}
\end{matrix}\right|
$$
for integers $c_j$ satisfying  $0\le c_1<\ldots<c_n\le n$.

Each of these determinants vanish for $x_1=s_{2n+p}$ again by Lemma~\ref{thm:det1}, so consequently 
$M_{n+p}$ also vanishes for $x_1=s_{2n+p}$. As above we see that the determinant does not depend on $a_{n+k,n+p-1}$ for $k=2,\ldots,p$.

The argument can now be repeated and we see that $M_{n+p}$ vanishes for $x_k=s_{2n+p}$  when $k=0,\ldots,p$.

This implies that 
$$
M_{n+p}=K \prod_{j=0}^p (x_j-s_{2n+p}),
 $$
where $K$ is the coefficient to $x_0x_1\ldots x_p$, when the determinant is written as
$$
M_{n+p}=\sum_{\sigma} \sign(\sigma) \prod_{j=0}^{n+p}a_{j,\sigma(j)},
$$
and the sum is over all permutations $\sigma$ of $0,1,\ldots,n+p$.

The terms containing the product $x_0x_1\ldots x_p$ requires the permutations $\sigma$ involved to satisfy $\sigma(n+l)=n+p-l,l=0,\ldots,p$. This yields a permutation of $n,n+1,\ldots,n+p$ onto itself reversing the order hence of sign $(-1)^{p(p+1)/2}$, while $\sigma$ yields an arbitrary permutation of $0,1,\ldots,n-1$. This shows that $K=(-1)^{p(p+1)/2}D_{n-1}$.

\end{proof}

\medskip

\noindent{\bf Proof of Theorem~\ref{thm:main}.}

The proof of Theorem~\ref{thm:main} is obvious if $n_0=0$, and if $n_0=1$ the proof is more elementary than in the general case, so we think it is worth  
giving it separately. Without loss of generality we assume $s_0=D_0=1$, and call $s_1=a$. From $D_1=0$ we then get that $s_2=a^2$, and we have to prove that $s_n=a^n$ for $n\ge 3$.

 Suppose now that it has been established that $s_k=a^k$ for $k\le n$, where $n\ge 2$. By assumption we have
\begin{equation}\label{eq:a} 
0=D_n=\left|\begin{matrix} 1 & a & \cdots & a^{n-1} & a^n\\
a & a^2 & \cdots & a^n & s_{n+1}\\
\vdots & \vdots & \ddots & \vdots &\vdots\\
a^{n-1} & a^n & \cdots & s_{2n-2} & s_{2n-1}\\
a^n & s_{n+1} & \cdots & s_{2n-1} & s_{2n}
\end{matrix}\right|.
\end{equation}
Developing the determinant after the last column, we notice that only the first two terms will appear because the minors for the  elements $s_{n+j},j=2,\ldots,n$ have two proportional rows $(1,a,\ldots,a^{n-1})$ and $(a,a^2,\ldots,a^n)$. Therefore
$$
D_n=(-1)^{n+2}a^n\left|\begin{matrix} a & a^2 & \cdots & a^n\\
a^2 & a^3 & \cdots & s_{n+1}\\
\vdots & \vdots & \ddots & \vdots\\
a^{n} & s_{n+1} & \cdots & s_{2n-1}
\end{matrix}\right| + (-1)^{n+3}s_{n+1}
\left|\begin{matrix} 1 & a & \cdots  & a^{n-1}\\
a^2 & a^3 & \cdots & s_{n+1}\\
\vdots & \vdots & \ddots &\vdots\\
a^n & s_{n+1} & \cdots & s_{2n-1}
\end{matrix}\right|,
$$
hence
$$
D_n=(-1)^n(a^{n+1}-s_{n+1})\left|\begin{matrix} 1 & a & \cdots  & a^{n-1}\\
a^2 & a^3 & \cdots & s_{n+1}\\
\vdots & \vdots & \ddots &\vdots\\
a^n & s_{n+1} & \cdots & s_{2n-1}
\end{matrix}\right|.
$$
The last $n\times n$-determinant is developed after the last column and the same procedure as before leads to
$$
D_n=(-1)^{n+(n-1)}\left(a^{n+1}-s_{n+1}\right)^2\left|\begin{matrix} 1 & a & \cdots  & a^{n-2}\\
a^3 & a^4 & \cdots & s_{n+1}\\
\vdots & \vdots & \ddots &\vdots\\
a^n & s_{n+1} & \cdots & s_{2n-2}
\end{matrix}\right|.
$$
Going on like this we finally get
$$
D_n=(-1)^{n+(n-1)+\cdots +2}\left(a^{n+1}-s_{n+1}\right)^{n-1}
\left|\begin{matrix} 1 & a\\ a^n & s_{n+1}\end{matrix}\right|=(-1)^{n(n+1)/2}
\left(a^{n+1}-s_{n+1}\right)^{n},
$$
and since $D_n=0$ we obtain that  $s_{n+1}=a^{n+1}$.

\medskip
We now go to the general case, where $n_0\ge 2$ is arbitrary.

We have already remarked that the Hankel matrix  $\mathcal H_{n_0-1}$ is positive definite, and we claim that $\mathcal H_{n_0}$ is positive semi-definite. In fact, if for $\eps>0$ we define 
\begin{equation}\label{eq:app}
s_k(\eps)=s_k,\;  k\ne 2n_0,\quad s_{2n_0}(\eps)=s_{2n_0}+\eps,
\end{equation}
and denote the corresponding Hankel matrices and determinants $\mathcal H_k(\eps), D_k(\eps)$, then
$$
\mathcal H_k(\eps)=\mathcal H_k,\,0\le k\le n_0-1, D_{n_0}(\eps)=D_{n_0}+\eps D_{n_0-1}=\eps D_{n_0-1}>0.
$$
This shows that $\mathcal H_{n_0}(\eps)$ is positive definite and letting $\eps$ tend to 0 we obtain that $\mathcal H_{n_0}$ is positive semi-definite.

The positive semi-definiteness of the Hankel matrix $\mathcal H_{n_0}$ makes it possible to define a semi-inner product on the vector space $\Pi_{n_0}$ of polynomials of degree
$\le n_0$ by defining $\langle x^j,x^k\rangle=s_{j+k},\,0\le j,k\le n_0$. The restriction of $\langle\cdot,\cdot\rangle$ to  $\Pi_{n_0-1}$ is an ordinary inner product and the formulas 
\begin{equation}\label{eq:op}p_0(x)=1,\;
p_n(x)=\left|\begin{matrix}
s_0 & s_1 & \cdots & s_n\\
\vdots & \vdots &\ddots & \vdots\\
s_{n-1} & s_n & \cdots & s_{2n-1}\\
1 & x & \cdots & x^n
\end{matrix}\right|,\quad 1\le n\le n_0
\end{equation}
define orthogonal polynomials, cf. \cite[Ch. 1]{Ak}.
 While $p_n(x)/\sqrt{D_{n-1}D_n}$ are orthonormal polynomials for $n<n_0$, it is not possible to normalize $p_{n_0}$ since $D_{n_0}=0$.
The theory of Gaussian quadratures remain valid for the polynomials $p_n,n\le n_0$, cf. \cite[Ch.1]{Ak}, so $p_{n_0}$ has $n_0$ simple real zeros and there is a discrete measure
$\mu$ concentrated in these zeros such that
\begin{equation}\label{eq:Gauss}
s_k=\int x^k\,d\mu(x),\quad 0\le k\le 2n_0-1.
\end{equation}

To finish the proof of Theorem~\ref{thm:main} we introduce the moments 
\begin{equation}\label{eq:tilde}
\tilde s_k=\int x^k\,d\mu(x),\quad k\ge 0
\end{equation}
of $\mu$ and shall prove that $s_k=\tilde s_k$ for all $k\ge 0$. We already know this for $k<2n_0$, and we shall now prove that $s_{2n_0}=\tilde s_{2n_0}$. Since $\mu$ is concentrated in the zeros of $p_{n_0}$ we get
\begin{equation}\label{eq:deg}
 \int p^2_{n_0}(x)\,d\mu(x)=0.
\end{equation}

If $(\tilde{D}_k)$
 denotes the sequence of Hankel determinants of the moment sequence $(\tilde{s}_k)$, we get from
 Lemma~\ref{thm:deg} that $\tilde{D}_k=0$ for $k\ge n_0$.

 Developing the determinants $D_{n_0}$ and $\tilde{D}_{n_0}$ after the last column and using that they are both equal to 0, we get
$$
s_{2n_0}D_{n_0-1}=\tilde s_{2n_0}D_{n_0-1},
$$
hence  $s_{2n_0}=\tilde s_{2n_0}$.

Assume now that $s_k=\tilde{s}_k$ for $k\le 2n_0+p-1$ for some $p\ge 1$, and let us prove that $s_{2n_0+p}=\tilde{s}_{2n_0+p}$.

The Hankel determinant $D_{n_0+p}$ is then a special case of the determinant $M_{n_0+p}$ of Lemma~\ref{thm:det2}, and it follows that
$$
D_{n_0+p}=(-1)^{p(p+1)/2}D_{n_0-1}\left(s_{2n_0+p} - \tilde{s}_{2n_0+p}\right)^{p+1}.
$$
Since  $D_{n_0+p}=0$ by hypothesis, we conclude that $s_{2n_0+p}=\tilde{s}_{2n_0+p}$.$\quad\square$

\noindent
Christian Berg\\
Department of Mathematical Sciences, University of Copenhagen,
Universitetsparken 5, DK-2100, Denmark\\
e-mail: {\tt{berg@math.ku.dk}}

\vspace{0.4cm}
\noindent
Ryszard Szwarc\\
Institute of Mathematics,
University of Wroc{\l}aw, pl.\ Grunwaldzki 2/4, 50-384 Wroc{\l}aw, Poland\\ 
e-mail: {\tt{szwarc2@gmail.com}}

\end{document}